\newtheorem{theorem}{Theorem}
\newtheorem{proposition}[theorem]{Proposition}
\newtheorem{lemma}[theorem]{Lemma}
\newtheorem{corollary}[theorem]{Corollary}
\newtheorem{definition}[theorem]{Definition}
\newtheorem{remark}{Remark}
\date{}
\begin{document}

\emph{{\footnotesize Submitted 24-Feb-2010; accepted 12-Oct-2010; to \emph{Int. J. Open Problems Comp. Math.}}}

\emph{}

\emph{}

\centerline{}

\centerline{}

\centerline{}
\centerline{}
\centerline {\Large{\bf Delta-Nabla Isoperimetric Problems}}

\centerline{}


\newcommand{\mvec}[1]{\mbox{\bfseries\itshape #1}}

\centerline{\bf {Agnieszka B. Malinowska, Delfim F. M. Torres}}

\centerline{}

\centerline{Bia{\l}ystok University of Technology,
15-351 Bia{\l}ystok, Poland}
\centerline{e-mail: abmalinowska@ua.pt}

\centerline{University of Aveiro,
3810-193 Aveiro, Portugal}
\centerline{e-mail: delfim@ua.pt}


\newtheorem{Theorem}{\quad Theorem}[section]

\newtheorem{Definition}[Theorem]{\quad Definition}

\newtheorem{Corollary}[Theorem]{\quad Corollary}

\newtheorem{Lemma}[Theorem]{\quad Lemma}

\newtheorem{Example}[Theorem]{\quad Example}

\begin{abstract}
\textbf{\emph{We prove general necessary optimality conditions
for delta-nabla isoperimetric problems of the calculus of variations.}}
\end{abstract}

\noindent {\bf Keywords:} \emph{calculus of variations,
delta and nabla derivatives and integrals,
isoperimetric problems,
necessary optimality conditions,
time scales.}

\medskip

\noindent {\bf 2010 Mathematics Subject Classification:}
\emph{49K05, 39A12, 34N05.}

\section{Introduction}

Isoperimetric problems consist in maximizing or minimizing
a cost functional subject to integral constraints.
They have found a broad class of
important applications throughout the centuries.
Areas of application include astronomy, geometry,
algebra, and analysis \cite{Viktor}.
The study of isoperimetric problems
is nowadays done, in an elegant and rigorously way,
by means of the theory of the calculus of variations
\cite{Brunt}, and concrete isoperimetric problems
in engineering have been investigated
by a number of authors \cite{Curtis}.
For recent developments on isoperimetric problems
we refer the reader to \cite{A:T,A:T:JMAA,F:T:09}
and references therein.

A new delta-nabla calculus of variations
has recently been introduced by the authors
in \cite{Bedlewo:2009}. The new calculus of variations
allow us to unify and extend the two standard approaches
of the calculus of variations on time scales \cite{F:T:08,NM:T,rachid},
and is motivated by applications in economics \cite{Pedregal}.

The delta-nabla variational theory is still in the very beginning,
and much remains to be done. In this note we develop further the theory
by introducing the isoperimetric problem in the delta-nabla setting and proving
respective necessary optimality conditions. Section~\ref{sec:prm} reviews
the Euler-Lagrange equations of the delta-nabla
calculus of variations \cite{Bedlewo:2009} and recalls
the results of the literature needed in the sequel.
Our contribution is given in Section~\ref{sec:mr:iso},
where the delta-nabla isoperimetric problem
is formulated and necessary optimality
conditions for both normal and abnormal extremizers are proved
(see Theorems~\ref{thm:mr:iso} and \ref{th:iso:abn}).
We proceed with Section~\ref{sec:ex},
illustrating the applicability of our results
with an example. Finally, we present the
conclusion (Section~\ref{sec:conc})
and some open problems (Section~\ref{sec:open}).

\section{Preliminaries}
\label{sec:prm}

We assume the reader to be familiar with the theory of time scales.
For an introduction to the calculus on time scales we refer
to the books \cite{B:P:01,B:P:03,Lak:book}.

Let $\mathbb{T}$ be a given time scale with jump operators
$\sigma$ and $\rho$, and differential operators $\Delta$ and $\nabla$.
Let $a, b \in \mathbb{T}$, $a < b$,
and $\left(\mathbb{T} \setminus \{a,b\}\right)\cap [a,b] \ne \emptyset$;
and $L_{\Delta}(\cdot,\cdot,\cdot)$ and $L_{\nabla}(\cdot,\cdot,\cdot)$ be two given smooth
functions from $\mathbb{T} \times \mathbb{R}^2$ to $\mathbb{R}$.
The results here discussed are trivially generalized for
admissible functions $y : \mathbb{T}\rightarrow\mathbb{R}^n$
but for simplicity of presentation we restrict ourselves to the scalar case $n=1$.
Throughout the text we use the operators $[y]$ and $\{y\}$ defined by
\begin{equation*}
[y](t) := \left(t,y^\sigma(t),y^\Delta(t)\right) \, ,
\quad \{y\}(t) := \left(t,y^\rho(t),y^\nabla(t)\right).
\end{equation*}

In \cite{Bedlewo:2009} the problem of extremizing
a delta-nabla variational functional
subject to given boundary conditions
$y(a) = \alpha$ and $y(b) = \beta$ is posed and studied:
\begin{equation}
\label{problem:P}
\begin{gathered}
\mathcal{J}(y) =
\left(\int_a^b L_{\Delta}[y](t) \Delta t\right)
\left(\int_a^b L_{\nabla}\{y\}(t) \nabla t\right) \longrightarrow
\textrm{extr} \\
y \in C_{\diamond}^1\left([a,b],\mathbb{R}\right) \\
y(a) = \alpha \, , \quad y(b) = \beta \, ,
\end{gathered}
\end{equation}
where $C_{\diamond}^1\left([a,b],\mathbb{R}\right)$
denote the class of functions
$y : [a,b]\rightarrow\mathbb{R}$  with
$y^\Delta$ continuous on $[a,b]^\kappa$
and $y^\nabla$ continuous on $[a,b]_\kappa$.

\begin{definition}
We say that $\hat{y}\in C_{\diamond}^{1}([a,b],
\mathbb{R})$ is a weak local minimizer (respectively weak local
maximizer) for problem \eqref{problem:P} if there exists
$\delta >0$ such that
$\mathcal{J}(\hat{y})\leq \mathcal{J}(y)$
(respectively $\mathcal{J}(\hat{y})
\geq \mathcal{J}(y)$)
for all $y \in C_{\diamond}^{1}([a,b], \mathbb{R})$
satisfying the boundary conditions
$y(a) = \alpha$ and $y(b) = \beta$, and
$||y - \hat{y}||_{1,\infty} < \delta$,
where
$||y||_{1,\infty}:=
||y^{\sigma}||_{\infty}
+ ||y^{\rho}||_{\infty}
+ ||y^{\Delta}||_{\infty}
+ ||y^{\nabla}||_{\infty}$
and
$||y||_{\infty} :=\sup_{t \in [a,b]_{\kappa}^{\kappa}}|y(t)|$.
\end{definition}

The main result of \cite{Bedlewo:2009} gives two different forms
for the Euler--Lagrange equation on time scales
associated with the variational problem \eqref{problem:P}.

\begin{theorem}[The general Euler-Lagrange equations on time scales \cite{Bedlewo:2009}]
\label{thm:mr}
If $\hat{y} \in C_{\diamond}^1$ is a weak local extremizer of problem
\eqref{problem:P}, then $\hat{y}$ satisfies
the following delta-nabla integral equations:
\begin{multline}
\label{eq:EL1}
\mathcal{J}_\nabla(\hat{y})
\left(\partial_3 L_\Delta[\hat{y}](\rho(t))
-\int_{a}^{\rho(t)} \partial_2 L_\Delta[\hat{y}](\tau) \Delta\tau\right)\\
+ \mathcal{J}_\Delta(\hat{y})
\left(\partial_3 L_\nabla\{\hat{y}\}(t)
-\int_{a}^{t} \partial_2 L_\nabla\{\hat{y}\}(\tau) \nabla\tau\right)
= \text{const} \quad \forall t \in [a,b]_\kappa \, ;
\end{multline}
\begin{multline}
\label{eq:EL2}
\mathcal{J}_\nabla(\hat{y})
\left(\partial_3 L_\Delta[\hat{y}](t)
-\int_{a}^{t} \partial_2 L_\Delta[\hat{y}](\tau) \Delta\tau\right)\\
+ \mathcal{J}_\Delta(\hat{y})
\left(\partial_3 L_\nabla\{\hat{y}\}(\sigma(t))
-\int_{a}^{\sigma(t)} \partial_2 L_\nabla\{\hat{y}\}(\tau) \nabla\tau\right)
= \text{const} \quad \forall t \in [a,b]^\kappa \, .
\end{multline}
\end{theorem}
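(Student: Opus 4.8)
The plan is to compute the first variation of $\mathcal{J}$ and to exploit the product structure $\mathcal{J}(y) = \mathcal{J}_\Delta(y)\,\mathcal{J}_\nabla(y)$, where $\mathcal{J}_\Delta(y) = \int_a^b L_\Delta[y]\,\Delta t$ and $\mathcal{J}_\nabla(y) = \int_a^b L_\nabla\{y\}\,\nabla t$. First I would fix an arbitrary variation $\eta \in C_{\diamond}^1([a,b],\mathbb{R})$ with $\eta(a) = \eta(b) = 0$, set $y = \hat{y} + \varepsilon\eta$, and consider $\phi(\varepsilon) := \mathcal{J}(\hat{y} + \varepsilon\eta)$. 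Since $\hat{y}$ is a weak local extremizer and $\eta$ preserves the boundary conditions, $\phi$ has an extremum at $\varepsilon = 0$, so $\phi'(0) = 0$. Differentiating the product and interchanging derivative and integral (justified by smoothness of $L_\Delta,L_\nabla$ and compactness of $[a,b]$) yields
\[
0 = \mathcal{J}_\nabla(\hat{y})\int_a^b\left(\partial_2 L_\Delta[\hat{y}]\,\eta^\sigma + \partial_3 L_\Delta[\hat{y}]\,\eta^\Delta\right)\Delta t + \mathcal{J}_\Delta(\hat{y})\int_a^b\left(\partial_2 L_\nabla\{\hat{y}\}\,\eta^\rho + \partial_3 L_\nabla\{\hat{y}\}\,\eta^\nabla\right)\nabla t .
\]

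Next I would remove $\eta^\sigma$ and $\eta^\rho$ by integration by parts on time scales. Putting $A(t) = \int_a^t \partial_2 L_\Delta[\hat{y}](\tau)\,\Delta\tau$ (so $A^\Delta = \partial_2 L_\Delta[\hat{y}]$ and $A(a)=0$) and integrating the delta integral by parts, the boundary term vanishes because $\eta(a)=\eta(b)=0$, leaving $\int_a^b \psi_\Delta\,\eta^\Delta\,\Delta t$ with $\psi_\Delta(t) := \partial_3 L_\Delta[\hat{y}](t) - \int_a^t \partial_2 L_\Delta[\hat{y}](\tau)\,\Delta\tau$. The analogous nabla integration by parts gives $\int_a^b \psi_\nabla\,\eta^\nabla\,\nabla t$ with $\psi_\nabla(t) := \partial_3 L_\nabla\{\hat{y}\}(t) - \int_a^t \partial_2 L_\nabla\{\hat{y}\}(\tau)\,\nabla\tau$.

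The crux, and the step I expect to be the main obstacle, is that the first variation now couples a delta integral tested against $\eta^\Delta$ with a nabla integral tested against $\eta^\nabla$, so no single fundamental lemma applies as it stands. Here I would invoke the standard relations between the two calculi, namely $\int_a^b f(t)\,\Delta t = \int_a^b f(\rho(t))\,\nabla t$ and $\int_a^b f(t)\,\nabla t = \int_a^b f(\sigma(t))\,\Delta t$, together with $\eta^\nabla(t) = \eta^\Delta(\rho(t))$ and $\eta^\Delta(t) = \eta^\nabla(\sigma(t))$, valid since $\eta \in C_{\diamond}^1$. To reach \eqref{eq:EL1} I rewrite the delta integral as $\int_a^b \psi_\Delta(\rho(t))\,\eta^\nabla(t)\,\nabla t$, so the whole variation collapses to $\int_a^b\bigl[\mathcal{J}_\nabla(\hat{y})\,\psi_\Delta(\rho(t)) + \mathcal{J}_\Delta(\hat{y})\,\psi_\nabla(t)\bigr]\eta^\nabla(t)\,\nabla t = 0$; to reach \eqref{eq:EL2} I symmetrically rewrite the nabla integral as $\int_a^b \psi_\nabla(\sigma(t))\,\eta^\Delta(t)\,\Delta t$.

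Finally I would apply the Dubois--Reymond type fundamental lemma of the calculus of variations on time scales: if $\int_a^b g\,\eta^\nabla\,\nabla t = 0$ for every admissible $\eta$ vanishing at the endpoints, then $g$ is constant on $[a,b]_\kappa$, with the delta analogue yielding a constant on $[a,b]^\kappa$. Applied to the two rearrangements above (noting that $\mathcal{J}_\Delta(\hat{y})$ and $\mathcal{J}_\nabla(\hat{y})$ are constant coefficients), this produces exactly \eqref{eq:EL1} and \eqref{eq:EL2}, which completes the proof.
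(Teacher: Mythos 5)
Your proposal is correct and follows essentially the same route the authors use: the paper states Theorem~\ref{thm:mr} without proof (citing \cite{Bedlewo:2009}), but its proof of the isoperimetric analogue, Theorem~\ref{thm:mr:iso}, proceeds exactly as you do --- first variation of the product, integration by parts against $\eta^\sigma$ and $\eta^\rho$, conversion of the delta integral to a nabla integral via \eqref{eq:DtoN} and \eqref{eq:chgN_to_D} (and symmetrically via \eqref{eq:NtoD} and \eqref{eq:chgD_to_N} for the second form), and then the nabla Dubois--Reymond lemma (Lemma~\ref{DBRL:n}) and its delta counterpart. No gaps beyond those already present in the paper's own level of rigor.
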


\begin{remark}
In the classical context (\textrm{i.e.},
when $\mathbb{T} = \mathbb{R}$) the
necessary conditions \eqref{eq:EL1}
and \eqref{eq:EL2} coincide with the Euler--Lagrange
equations recently obtained in \cite{Pedregal}.
\end{remark}

Our main goal is to generalize Theorem~\ref{thm:mr}
by covering variational problems subject to isoperimetric constraints.
In order to do it (\textrm{cf.} proof of Theorem~\ref{thm:mr:iso})
we use some relationships of \cite{A:G:02} between the delta and nabla derivatives,
and some relationships of \cite{G:G:S:05} between the delta and nabla integrals.

\begin{proposition}[Theorems~2.5 and 2.6 of \cite{A:G:02}]
\label{prop:rel:der}
(i) If $f : \mathbb{T} \rightarrow \mathbb{R}$ is delta differentiable
on $\mathbb{T}^\kappa$ and $f^\Delta$ is continuous on $\mathbb{T}^\kappa$,
then $f$ is nabla differentiable on $\mathbb{T}_\kappa$ and
\begin{equation}
\label{eq:chgN_to_D}
f^\nabla(t)=\left(f^\Delta\right)^\rho(t) \quad \text{for all }
t \in \mathbb{T}_\kappa \, .
\end{equation}
(ii) If $f : \mathbb{T} \rightarrow \mathbb{R}$ is nabla differentiable
on $\mathbb{T}_\kappa$ and $f^\nabla$ is continuous on $\mathbb{T}_\kappa$,
then $f$ is delta differentiable on $\mathbb{T}^\kappa$ and
\begin{equation}
\label{eq:chgD_to_N}
f^\Delta(t)=\left(f^\nabla\right)^\sigma(t) \quad \text{for all }
t \in \mathbb{T}^\kappa \, .
\end{equation}
\end{proposition}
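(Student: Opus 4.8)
The plan is to prove part~(i) in full and then obtain part~(ii) by the dual argument, exchanging the roles of $\sigma$ and $\rho$, of $\Delta$ and $\nabla$, and of the forward and backward graininess $\mu(t)=\sigma(t)-t$ and $\nu(t)=t-\rho(t)$. First I would record that delta differentiability of $f$ forces $f$ to be continuous (a standard fact on time scales), so that the nabla derivative of $f$ is well defined wherever it is claimed to exist. The proof of~(i) then splits according to whether a point $t\in\mathbb{T}_\kappa$ is left-scattered or left-dense.

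If $t$ is left-scattered, then $\rho(t)<t$ and $\sigma(\rho(t))=t$, so that $\rho(t)$ is right-scattered. Here no limiting argument is needed: by the characterization of the nabla derivative at a left-scattered point and of the delta derivative at a right-scattered point, both sides of~\eqref{eq:chgN_to_D} reduce to the same difference quotient,
\[
f^\nabla(t)=\frac{f(t)-f(\rho(t))}{t-\rho(t)}=\frac{f(\sigma(\rho(t)))-f(\rho(t))}{\sigma(\rho(t))-\rho(t)}=\left(f^\Delta\right)^\rho(t),
\]
which settles this case directly (continuity of $f^\Delta$ is not even used here).

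If $t$ is left-dense, then $\rho(t)=t$ and the claim becomes $f^\nabla(t)=f^\Delta(t)$. This is the step I expect to be the main obstacle, because now a genuine limit of difference quotients must be controlled rather than a single quotient evaluated. The tool I would use is the fundamental theorem of the delta calculus: since $f^\Delta$ is continuous, hence rd-continuous and delta integrable, $f$ is a delta antiderivative of $f^\Delta$, so $f(t)-f(s)=\int_s^t f^\Delta(\tau)\,\Delta\tau$ for $s$ near $t$. Using $\int_s^t\Delta\tau=t-s$ I would write
\[
\frac{f(t)-f(s)}{t-s}-f^\Delta(t)=\frac{1}{t-s}\int_s^t\left(f^\Delta(\tau)-f^\Delta(t)\right)\Delta\tau,
\]
and bound the right-hand side in absolute value by $\sup_{\tau\in[s,t)}\left|f^\Delta(\tau)-f^\Delta(t)\right|$. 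Since $t$ is left-dense, every $\tau\in[s,t)$ satisfies $|\tau-t|\le t-s$, so continuity of $f^\Delta$ at $t$ forces this supremum to tend to $0$ as $s\to t$. Hence the difference quotient converges to $f^\Delta(t)$, which is exactly $f^\nabla(t)=f^\Delta(t)=\left(f^\Delta\right)^\rho(t)$ at the left-dense point.

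Finally, for part~(ii) I would run the mirror-image argument. At a right-scattered $t$ one has $\sigma(t)>t$ and $\rho(\sigma(t))=t$, and both sides of~\eqref{eq:chgD_to_N} collapse to $\frac{f(\sigma(t))-f(t)}{\sigma(t)-t}$; at a right-dense $t$ one invokes the fundamental theorem of the nabla calculus, $f(s)-f(t)=\int_t^s f^\nabla(\tau)\,\nabla\tau$, together with the continuity of $f^\nabla$, to conclude $f^\Delta(t)=f^\nabla(t)=\left(f^\nabla\right)^\sigma(t)$ by the same estimate. The only delicate point throughout is, as above, the dense case, where the passage from a difference quotient to the derivative rests on representing the increment of $f$ as an integral of its continuous derivative and then exploiting the uniform smallness of the integrand near the dense point.
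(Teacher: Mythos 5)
This proposition is quoted by the paper from Atici and Guseinov \cite{A:G:02} (their Theorems~2.5 and 2.6) and is given no proof here, so there is no in-paper argument to measure yours against; I can only assess it on its own terms, and on those terms it is correct and follows the standard route for this result. The case split at $t\in\mathbb{T}_\kappa$ into left-scattered and left-dense is the right one: when $t$ is left-scattered, $\sigma(\rho(t))=t$ makes both sides of \eqref{eq:chgN_to_D} the same difference quotient (you should add the small remark that $\rho(t)\in\mathbb{T}^\kappa$, which holds because $\rho(t)<t\le\sup\mathbb{T}$, so that $f^\Delta(\rho(t))$ is defined, and that the quotient formula for $f^\nabla$ at a left-scattered point uses continuity of $f$ at $t$, which you correctly extract from delta differentiability). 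In the left-dense case your instinct about where the real work lies is slightly off target: if $t$ is left-dense \emph{and} right-dense, the delta and nabla derivatives are literally the same two-sided limit of $\bigl(f(t)-f(s)\bigr)/(t-s)$ and nothing needs to be proved; the genuinely nontrivial configuration is $t$ left-dense and right-scattered, where $f^\nabla(t)$ is the one-sided limit $\lim_{s\to t^-}\bigl(f(t)-f(s)\bigr)/(t-s)$ while $f^\Delta(t)$ is the forward quotient $\bigl(f(\sigma(t))-f(t)\bigr)/\bigl(\sigma(t)-t\bigr)$, two a priori unrelated quantities. Your estimate via the delta fundamental theorem of calculus, $f(t)-f(s)=\int_s^t f^\Delta(\tau)\,\Delta\tau$ together with continuity of $f^\Delta$ at $t$, is exactly what bridges them, and this is the one place the continuity hypothesis is indispensable (the identity fails without it). With that clarification, and the dual argument for part~(ii), your proof is complete.
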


\begin{proposition}[Proposition~7 of \cite{G:G:S:05}]
If function $f : \mathbb{T} \rightarrow \mathbb{R}$
is continuous, then for all $a, b \in \mathbb{T}$
with $a < b$ we have
\begin{gather}
\int_a^b f(t) \Delta t = \int_a^b f^\rho(t) \nabla t \, , \label{eq:DtoN}\\
\int_a^b f(t) \nabla t = \int_a^b f^\sigma(t) \Delta t \, . \label{eq:NtoD}
\end{gather}
\end{proposition}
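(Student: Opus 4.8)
The plan is to derive each identity by routing both sides through a single antiderivative, combining the fundamental theorem of calculus on time scales with the derivative relations of Proposition~\ref{prop:rel:der}. Since $f$ is continuous on $\mathbb{T}$, it is in particular both rd-continuous and ld-continuous, so the standard existence theorems (see \cite{B:P:01,B:P:03}) guarantee that $f$ possesses a delta-antiderivative and a nabla-antiderivative on $[a,b]$.

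To prove \eqref{eq:DtoN}, I would pick a delta-antiderivative $F$ of $f$, so that $F^\Delta = f$ on $[a,b]^\kappa$. Continuity of $F^\Delta = f$ lets me apply part (i) of Proposition~\ref{prop:rel:der} to $F$, obtaining that $F$ is nabla differentiable with
\[
F^\nabla(t) = \left(F^\Delta\right)^\rho(t) = f^\rho(t)
\]
for every $t \in [a,b]_\kappa$. Hence the same function $F$ serves as a delta-antiderivative of $f$ and as a nabla-antiderivative of $f^\rho$, and two invocations of the fundamental theorem of calculus yield
\[
\int_a^b f(t)\,\Delta t = F(b) - F(a) = \int_a^b f^\rho(t)\,\nabla t .
\]

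Identity \eqref{eq:NtoD} is then obtained by the dual argument. Taking a nabla-antiderivative $G$ of $f$, so that $G^\nabla = f$ is continuous on $[a,b]_\kappa$, part (ii) of Proposition~\ref{prop:rel:der} gives $G^\Delta(t) = (G^\nabla)^\sigma(t) = f^\sigma(t)$ on $[a,b]^\kappa$; thus $G$ is a delta-antiderivative of $f^\sigma$, and the fundamental theorem of calculus produces $\int_a^b f\,\nabla t = G(b) - G(a) = \int_a^b f^\sigma\,\Delta t$.

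The one point that genuinely needs care is checking that the regularity hypotheses of Proposition~\ref{prop:rel:der} hold. Each hypothesis demands continuity of the derivative of the antiderivative in play, but that derivative is exactly $f$, so the assumed continuity of $f$ supplies it directly. Once this is in place, the argument reduces to bookkeeping with the fundamental theorem, and I expect no further difficulty.
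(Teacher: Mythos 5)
Your argument is correct. Note, however, that the paper itself offers no proof of this proposition: it is quoted verbatim as Proposition~7 of \cite{G:G:S:05}, so there is no internal argument to compare yours against. What you have written is essentially the standard derivation from the literature: take a delta-antiderivative $F$ of the (rd-continuous, since continuous) function $f$, use Proposition~\ref{prop:rel:der}(i) to see that the same $F$ is a nabla-antiderivative of $f^\rho$, and evaluate both integrals as $F(b)-F(a)$ via the two fundamental theorems of calculus on time scales; the dual argument gives \eqref{eq:NtoD}. The only hypothesis worth flagging beyond what you checked is that the nabla fundamental theorem requires the integrand $f^\rho$ to be ld-continuous (and $f^\sigma$ to be rd-continuous in the dual case), which does follow from continuity of $f$ but deserves a one-line citation to \cite{B:P:01}. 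Your proof is self-contained relative to the results the paper already quotes (Proposition~\ref{prop:rel:der} and the fundamental theorems), so it would serve as a legitimate replacement for the external citation.
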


We also use the nabla Dubois--Reymond lemma of \cite{NM:T}.

\begin{lemma}[Lemma~14 of \cite{NM:T}]
\label{DBRL:n}
Let $f \in C_{\textrm{ld}}([a,b], \mathbb{R})$. If
$$
\int_{a}^{b} f(t)\eta^{\nabla}(t)\nabla t=0 \quad
\mbox{for all $\eta \in C_{\textrm{ld}}^1([a,b],
\mathbb{R})$ with $\eta(a)=\eta(b)=0$} \, ,
$$
then $f(t)=c$ on $t\in [a,b]_\kappa$
for some constant $c$.
\end{lemma}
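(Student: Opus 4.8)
The plan is to adapt the classical du Bois--Reymond argument to the nabla calculus, replacing the ordinary derivative and integral by their nabla counterparts. First I would single out the constant $c$ as the nabla average of $f$ on $[a,b]$, that is $c := \frac{1}{b-a}\int_a^b f(\tau)\nabla\tau$; this is well defined because $a<b$ and $\int_a^b 1\,\nabla\tau = b-a$. The natural candidate for a test function is then $\eta(t) := \int_a^t \bigl(f(\tau)-c\bigr)\nabla\tau$.

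Next I would verify that this $\eta$ is admissible. Since $f$ is ld-continuous, so is $f-c$, and the nabla fundamental theorem of calculus gives that $\eta$ is nabla differentiable with $\eta^\nabla(t)=f(t)-c$, which is again ld-continuous; hence $\eta\in C_{\textrm{ld}}^1([a,b],\mathbb{R})$. By construction $\eta(a)=0$, and $\eta(b)=\int_a^b f(\tau)\nabla\tau - c(b-a)=0$ precisely by the choice of $c$, so $\eta$ satisfies the boundary conditions required in the hypothesis.

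With this $\eta$ the hypothesis reads $\int_a^b f(t)\eta^\nabla(t)\nabla t=0$. Subtracting $\int_a^b c\,\eta^\nabla(t)\nabla t = c\bigl(\eta(b)-\eta(a)\bigr)=0$ and using $\eta^\nabla=f-c$, I obtain $\int_a^b \bigl(f(t)-c\bigr)^2\nabla t=0$. The conclusion then follows from the standard fact that a nonnegative ld-continuous function with vanishing nabla integral is identically zero on $[a,b]_\kappa$, which yields $f(t)=c$ for all $t\in[a,b]_\kappa$. I expect no serious obstacle here; the only delicate points are checking that the constructed $\eta$ lies in $C_{\textrm{ld}}^1$ with the correct nabla derivative (which rests on the nabla fundamental theorem of calculus) and the final nonnegativity argument, both of which are routine once the nabla machinery is in place.
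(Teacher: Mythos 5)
Your proof is correct and is exactly the classical du Bois--Reymond construction; note that the paper itself does not prove this lemma but simply imports it as Lemma~14 of \cite{NM:T}, where the argument given is the same one you propose (take $c$ to be the nabla mean of $f$, test with $\eta(t)=\int_a^t(f(\tau)-c)\nabla\tau$, and conclude from $\int_a^b(f(t)-c)^2\nabla t=0$). The only points requiring care are the ones you already flag: the nabla fundamental theorem of calculus guaranteeing $\eta\in C^{1}_{\textrm{ld}}$ with $\eta^\nabla=f-c$ on $[a,b]_\kappa$, and the fact that a nonnegative ld-continuous function with zero nabla integral vanishes on $[a,b]_\kappa$.
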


\section{Main Results}
\label{sec:mr:iso}

We consider delta-nabla isoperimetric problems on time scales.
The problem consists of extremizing
\begin{equation}
\label{problem:P:iso}
\mathcal{L}(y) = \left(\int_a^b L_{\Delta}[y](t) \Delta t\right)
\left(\int_a^b L_{\nabla}\{y\}(t) \nabla t\right) \longrightarrow
\textrm{extr}
\end{equation}
in the class of functions $y \in C_{\diamond}^{1}([a,b], \mathbb{R})$
satisfying the boundary conditions
\begin{equation}\label{bou:con}
y(a) = \alpha \, , \quad y(b) = \beta \, ,
\end{equation}
and the constraint
\begin{equation}\label{const}
\mathcal{K}(y) = \left(\int_a^b K_{\Delta}[y](t) \Delta t\right)
\left(\int_a^b K_{\nabla}\{y\}(t) \nabla t\right)=k,
\end{equation}
where $\alpha$, $\beta$, $k$ are given real numbers.

\begin{definition}
We say that $\hat{y}\in C_{\diamond}^{1}([a,b],\mathbb{R})$
is a weak local minimizer (respectively weak local
maximizer) for \eqref{problem:P:iso}--\eqref{const}
if there exists $\delta>0$ such that
$$
\mathcal{L}(\hat{y})\leq \mathcal{L}(y) \quad (\text{respectively} \
\   \mathcal{L}(\hat{y}) \geq \mathcal{L}(y))
$$
for all $y \in C_{\diamond}^{1}([a,b], \mathbb{R})$ satisfying the boundary
conditions \eqref{bou:con}, the isoperimetric constraint
\eqref{const}, and
$||y - \hat{y}||_{1,\infty} < \delta$.
\end{definition}

\begin{definition}
We say that $\hat{y} \in C_{\diamond}^1$ is an extremal for $\mathcal{K}$
if $\hat{y}$ satisfies the delta-nabla integral
equations \eqref{eq:EL1} and \eqref{eq:EL2} for $\mathcal{K}$, \textrm{i.e.},
\begin{multline}
\label{eq:EL1:iso}
\mathcal{K}_\nabla(\hat{y}) \left(\partial_3
K_\Delta[\hat{y}](\rho(t))
-\int_{a}^{\rho(t)} \partial_2 K_\Delta[\hat{y}](\tau) \Delta\tau\right)\\
+ \mathcal{K}_\Delta(\hat{y}) \left(\partial_3
K_\nabla\{\hat{y}\}(t) -\int_{a}^{t} \partial_2
K_\nabla\{\hat{y}\}(\tau) \nabla\tau\right) = \text{const} \quad
\forall t \in [a,b]_\kappa \, ;
\end{multline}
\begin{multline}
\label{eq:EL2:iso}
\mathcal{K}_\nabla(\hat{y}) \left(\partial_3
K_\Delta[\hat{y}](t)
-\int_{a}^{t} \partial_2 K_\Delta[\hat{y}](\tau) \Delta\tau\right)\\
+ \mathcal{K}_\Delta(\hat{y}) \left(\partial_3
K_\nabla\{\hat{y}\}(\sigma(t)) -\int_{a}^{\sigma(t)} \partial_2
K_\nabla\{\hat{y}\}(\tau) \nabla\tau\right) = \text{const} \quad
\forall t \in [a,b]^\kappa \, .
\end{multline}
An extremizer (\textrm{i.e.}, a weak local minimizer or a weak local
maximizer) for the problem \eqref{problem:P:iso}--\eqref{const} that is
not an extremal for $\mathcal{K}$ is said to be a normal extremizer;
otherwise (\textrm{i.e.}, if it is an extremal for $\mathcal{K}$), the
extremizer is said to be abnormal.
\end{definition}

\begin{theorem}
\label{thm:mr:iso}
If $\hat{y} \in C_{\diamond}^1\left([a,b],\mathbb{R}\right)$
is a normal extremizer for the isoperimetric problem
\eqref{problem:P:iso}--\eqref{const}, then there exists $\lambda \in \mathbb{R}$
such that $\hat{y}$ satisfies the following delta-nabla integral equations:
\begin{multline}
\label{iso:EL1}
\mathcal{L}_\nabla(\hat{y}) \left(\partial_3
L_\Delta[\hat{y}](\rho(t))
-\int_{a}^{\rho(t)} \partial_2 L_\Delta[\hat{y}](\tau) \Delta\tau\right)\\
+ \mathcal{L}_\Delta(\hat{y}) \left(\partial_3
L_\nabla\{\hat{y}\}(t) -\int_{a}^{t} \partial_2
L_\nabla\{\hat{y}\}(\tau) \nabla\tau\right) \\
-\lambda\left\{\mathcal{K}_\nabla(\hat{y}) \left(\partial_3
K_\Delta[\hat{y}](\rho(t))
-\int_{a}^{\rho(t)} \partial_2 K_\Delta[\hat{y}](\tau) \Delta\tau\right)\right.\\
\left.+ \mathcal{K}_\Delta(\hat{y}) \left(\partial_3
K_\nabla\{\hat{y}\}(t) -\int_{a}^{t} \partial_2
K_\nabla\{\hat{y}\}(\tau) \nabla\tau\right)\right\} = \text{const}
\quad \forall t \in [a,b]_\kappa \, ;
\end{multline}
\begin{multline}
\label{iso:EL2}
\mathcal{L}_\nabla(\hat{y}) \left(\partial_3
L_\Delta[\hat{y}](t)
-\int_{a}^{t} \partial_2 L_\Delta[\hat{y}](\tau) \Delta\tau\right)\\
+ \mathcal{L}_\Delta(\hat{y}) \left(\partial_3
L_\nabla\{\hat{y}\}(\sigma(t)) -\int_{a}^{\sigma(t)} \partial_2
L_\nabla\{\hat{y}\}(\tau) \nabla\tau\right) \\
-\lambda \left\{\mathcal{K}_\nabla(\hat{y}) \left(\partial_3
K_\Delta[\hat{y}](t)
-\int_{a}^{t} \partial_2 K_\Delta[\hat{y}](\tau) \Delta\tau\right)\right.\\
+ \left.\mathcal{K}_\Delta(\hat{y}) \left(\partial_3
K_\nabla\{\hat{y}\}(\sigma(t)) -\int_{a}^{\sigma(t)} \partial_2
K_\nabla\{\hat{y}\}(\tau) \nabla\tau\right)\right\}= \text{const}
\quad \forall t \in [a,b]^\kappa \, .
\end{multline}
\end{theorem}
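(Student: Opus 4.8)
The plan is to adapt the classical Lagrange multiplier technique to the delta-nabla setting, using a two-parameter family of variations together with the finite-dimensional implicit function theorem. First I would embed $\hat{y}$ in the family
$$ y = \hat{y} + \varepsilon_1 \eta_1 + \varepsilon_2 \eta_2, $$
with $\eta_1,\eta_2 \in C_{\diamond}^1([a,b],\mathbb{R})$ and $\eta_i(a)=\eta_i(b)=0$, so that every member satisfies the boundary conditions \eqref{bou:con}. Setting
$$ \hat{\mathcal{K}}(\varepsilon_1,\varepsilon_2) := \mathcal{K}(y)-k, \qquad \hat{\mathcal{L}}(\varepsilon_1,\varepsilon_2) := \mathcal{L}(y), $$
I have $\hat{\mathcal{K}}(0,0)=0$, and the constrained extremality of $\hat{y}$ means that $(0,0)$ is a local extremum of $\hat{\mathcal{L}}$ on the level set $\hat{\mathcal{K}}=0$.

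The crucial preliminary step is to show $\eta_2$ can be chosen with $\partial\hat{\mathcal{K}}/\partial\varepsilon_2|_{(0,0)}\neq 0$. Writing $\mathcal{K}=\mathcal{K}_\Delta\cdot\mathcal{K}_\nabla$ and differentiating, then integrating by parts in both the delta and the nabla integral (the boundary terms drop since $\eta_2$ vanishes at $a,b$), and converting the delta integral into a nabla integral via \eqref{eq:DtoN} together with $(\eta_2^\Delta)^\rho=\eta_2^\nabla$ from \eqref{eq:chgN_to_D}, I can collect everything into
$$ \left.\frac{\partial\hat{\mathcal{K}}}{\partial\varepsilon_2}\right|_{(0,0)} = \int_a^b u_{\mathcal{K}}(t)\,\eta_2^\nabla(t)\,\nabla t, $$
where $u_{\mathcal{K}}(t)$ is exactly the left-hand side of the extremal equation \eqref{eq:EL1:iso}. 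Since $\hat{y}$ is a \emph{normal} extremizer, it is not an extremal for $\mathcal{K}$, so $u_{\mathcal{K}}$ is not constant on $[a,b]_\kappa$; the contrapositive of the nabla Dubois--Reymond Lemma~\ref{DBRL:n} then yields an admissible $\eta_2$ making this integral nonzero, and I fix such an $\eta_2$.

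With this $\eta_2$ fixed, the conditions $\hat{\mathcal{K}}(0,0)=0$ and $\partial\hat{\mathcal{K}}/\partial\varepsilon_2|_{(0,0)}\neq 0$ let the implicit function theorem produce a $C^1$ curve $\varepsilon_2=\varepsilon_2(\varepsilon_1)$ near the origin, with $\varepsilon_2(0)=0$, along which $\hat{\mathcal{K}}\equiv 0$; every variation on this curve is admissible for the isoperimetric problem. Since $\varepsilon_1=0$ extremizes $\varepsilon_1\mapsto\hat{\mathcal{L}}(\varepsilon_1,\varepsilon_2(\varepsilon_1))$, differentiating and using $\varepsilon_2'(0)=-(\partial\hat{\mathcal{K}}/\partial\varepsilon_1)/(\partial\hat{\mathcal{K}}/\partial\varepsilon_2)$ at the origin, with $\lambda:=\big((\partial\hat{\mathcal{L}}/\partial\varepsilon_2)/(\partial\hat{\mathcal{K}}/\partial\varepsilon_2)\big)|_{(0,0)}$, gives $\partial\hat{\mathcal{L}}/\partial\varepsilon_1|_{(0,0)}=\lambda\,\partial\hat{\mathcal{K}}/\partial\varepsilon_1|_{(0,0)}$. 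Computing $\partial\hat{\mathcal{L}}/\partial\varepsilon_1|_{(0,0)}$ exactly as above produces $\int_a^b u_{\mathcal{L}}(t)\,\eta_1^\nabla(t)\,\nabla t$, where $u_{\mathcal{L}}$ is the left-hand side of \eqref{eq:EL1}, so that
$$ \int_a^b \big(u_{\mathcal{L}}(t)-\lambda\,u_{\mathcal{K}}(t)\big)\,\eta_1^\nabla(t)\,\nabla t = 0 $$
for every admissible $\eta_1$. A final application of Lemma~\ref{DBRL:n} forces $u_{\mathcal{L}}-\lambda\,u_{\mathcal{K}}$ to be constant on $[a,b]_\kappa$, which is precisely \eqref{iso:EL1}. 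Equation \eqref{iso:EL2} follows by the symmetric argument in which the nabla integrals are instead recast as delta integrals via \eqref{eq:NtoD} and $(\eta^\nabla)^\sigma=\eta^\Delta$ from \eqref{eq:chgD_to_N}, writing the first variations as $\int_a^b(\cdot)\,\eta^\Delta\,\Delta t$ and invoking the delta analogue of the Dubois--Reymond lemma.

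I expect the main obstacle to be the non-degeneracy step: one must verify carefully that ``normal'' (that is, $\hat{y}$ not an extremal for $\mathcal{K}$) translates, \emph{after} the two integrations by parts and the delta-to-nabla conversions, into $u_{\mathcal{K}}$ failing to be constant, and hence into the solvability hypothesis of the implicit function theorem. The remaining work --- recasting both the delta and the nabla contributions as a single integral against $\eta^\nabla$ (respectively $\eta^\Delta$) --- is routine but must be carried out consistently with the shifts $\rho(t)$ and $\sigma(t)$ that appear in \eqref{iso:EL1} and \eqref{iso:EL2}.
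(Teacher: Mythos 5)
Your proposal is correct and follows essentially the same route as the paper's proof: the same two-parameter variation, the same integration by parts followed by the delta-to-nabla conversion via \eqref{eq:DtoN} and \eqref{eq:chgN_to_D}, the same use of normality through the contrapositive of Lemma~\ref{DBRL:n} to secure the hypothesis of the implicit function theorem, and the same final application of the Dubois--Reymond lemma (with the symmetric delta-side argument for \eqref{iso:EL2}). The only cosmetic difference is that you derive $\lambda$ explicitly from the chain rule along the curve $\varepsilon_2(\varepsilon_1)$, whereas the paper simply invokes the finite-dimensional Lagrange multiplier rule.
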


\begin{proof}
Consider a variation of $\hat{y}$, say $\bar{y}=\hat{y} +
\varepsilon_{1} \eta_{1}+\varepsilon_{2} \eta_{2}$, where for each
$i\in \{1,2\}$, $\eta_{i}\in C_{\diamond}^{1}([a,b],\mathbb{R})$
and $\eta_{i}(a)=\eta_{i}(b)=0$, and $\varepsilon_{i}$ is a
sufficiently small parameter ($\varepsilon_{1}$ and
$\varepsilon_{2}$ must be such that
$||\bar{y}-\hat{y}||_{1,\infty}<\delta$ for some $\delta>0$).
Here, $\eta_{1}$ is an arbitrary fixed function and $\eta_{2}$ is a
fixed function that will be chosen later. Define the real function
\begin{equation*}
\bar{K}(\varepsilon_{1},\varepsilon_{2})=\mathcal{K}(\bar{y})=\left(\int_a^b
K_{\Delta}[\bar{y}](t) \Delta t\right) \left(\int_a^b
K_{\nabla}\{\bar{y}\}(t) \nabla t\right)-k.
\end{equation*}
We have
\begin{multline*}
\left.\frac{\partial\bar{K}}{\partial
\varepsilon_{2}}\right|_{(0,0)} = \mathcal{K}_\nabla(\hat{y})
\int_a^b \left(\partial_2 K_\Delta[\hat{y}](t) \eta_{2}^\sigma(t) +
\partial_3 K_\Delta[\hat{y}](t) \eta_{2}^\Delta(t)\right) \Delta t\\
+ \mathcal{K}_\Delta(\hat{y}) \int_a^b \left(\partial_2
K_\nabla\{\hat{y}\}(t) \eta_{2}^\rho(t) + \partial_3
K_\nabla\{\hat{y}\}(t) \eta_{2}^\nabla(t)\right) \nabla t = 0 \, .
\end{multline*}
We now make use of the following formulas of integration by parts \cite{B:P:01}:
if functions $f,g : \mathbb{T}\rightarrow\mathbb{R}$ are delta and nabla
differentiable with continuous derivatives, then
\begin{equation*}
\begin{split}
\int_{a}^{b}f^\sigma(t) g^{\Delta}(t)\Delta t
&=\left.(fg)(t)\right|_{t=a}^{t=b}
-\int_{a}^{b}f^{\Delta}(t)g(t)\Delta t \, , \\
\int_{a}^{b}f^\rho(t)g^{\nabla}(t)\nabla t
&=\left.(fg)(t)\right|_{t=a}^{t=b}
-\int_{a}^{b}f^{\nabla}(t)g(t)\nabla t \, .
\end{split}
\end{equation*}
Having in mind that $\eta_{2}(a)=\eta_{2}(b)=0$,
we obtain:
\begin{multline*}
\int_a^b \partial_2 K_\Delta[\hat{y}](t) \eta_{2}^\sigma(t)\Delta t=
\int_a^t\partial_2 K_\Delta[\hat{y}](\tau)\Delta \tau
\eta_{2}(t)|^{t=b}_{t=a}\\
-\int_a^b\left(\int_a^t
\partial_2 K_\Delta[\hat{y}](\tau)\Delta\tau\right)\eta_{2}^{\Delta}(t) \Delta
t =-\int_a^b\left(\int_a^t
\partial_2 K_\Delta[\hat{y}](\tau)\Delta\tau\right)\eta_{2}^{\Delta}(t) \Delta t
\end{multline*}
and
\begin{multline*}
\int_a^b \partial_2 K_\nabla\{\hat{y}\}(t) \eta_{2}^\rho(t)\nabla
t=\int_a^t\partial_2 K_\nabla\{\hat{y}\}(\tau)\nabla\tau
\eta_{2}(t)|^{t=b}_{t=a}\\
-\int_a^b\left(\int_a^t\partial_2 K_\nabla\{\hat{y}\}(\tau)\nabla
\tau \right) \eta_{2}^\nabla(t)\nabla t
=-\int_a^b\left(\int_a^t\partial_2 K_\nabla\{\hat{y}\}(\tau)\nabla
\tau \right) \eta_{2}^\nabla(t)\nabla t .
\end{multline*}
Therefore,
\begin{multline}
\label{after:parts}
\left.\frac{\partial\bar{K}}{\partial
\varepsilon_{2}}\right|_{(0,0)}=\mathcal{K}_\nabla(\hat{y}) \int_a^b
\left(\partial_3 K_\Delta[\hat{y}](t)-\int_a^t
\partial_2 K_\Delta[\hat{y}](\tau)\Delta\tau\right)\eta_{2}^{\Delta}(t) \Delta
t\\
+\mathcal{K}_\Delta(\hat{y}) \int_a^b \left(\partial_3
K_\nabla\{\hat{y}\}(t)-\int_a^t\partial_2
K_\nabla\{\hat{y}\}(\tau)\nabla \tau \right)
\eta_{2}^\nabla(t)\nabla t.
\end{multline}
Let $$f(t)=\mathcal{K}_\nabla(\hat{y})\left(\partial_3
K_\Delta[\hat{y}](t)-\int_a^t
\partial_2 K_\Delta[\hat{y}](\tau)\Delta\tau\right)$$
and $$g(t)=\mathcal{K}_\Delta(\hat{y})\left(\partial_3
K_\nabla\{\hat{y}\}(t)-\int_a^t\partial_2
K_\nabla\{\hat{y}\}(\tau)\nabla \tau \right).$$ We can then write
equation \eqref{after:parts} in the form
\begin{equation}\label{after:sub}
\left.\frac{\partial\bar{K}}{\partial
\varepsilon_{2}}\right|_{(0,0)}=\int_a^bf(t)\eta_{2}^{\Delta}(t)
\Delta t+\int_a^bg(t)\eta_{2}^\nabla(t)\nabla t.
\end{equation}
Transforming the delta integral in \eqref{after:sub} to a nabla
integral by means of \eqref{eq:DtoN} we obtain
\begin{equation*}
\left.\frac{\partial\bar{K}}{\partial
\varepsilon_{2}}\right|_{(0,0)}=\int_a^bf^{\rho}(t)(\eta_{2}^{\Delta})^{\rho}(t)
\nabla t+\int_a^bg(t)\eta_{2}^\nabla(t)\nabla t
\end{equation*}
and by \eqref{eq:chgN_to_D}
\begin{equation*}
\left.\frac{\partial\bar{K}}{\partial
\varepsilon_{2}}\right|_{(0,0)}=\int_a^b\left(f^{\rho}(t)
+g(t)\right)\eta_{2}^\nabla(t)\nabla t.
\end{equation*}
As $\hat{y}$ is a normal extremizer we conclude, by
Lemma~\ref{DBRL:n} and equation \eqref{eq:EL2:iso}, that there exists
$\eta_2$ such that $\left.\frac{\partial\bar{K}}{\partial
\varepsilon_{2}}\right|_{(0,0)}\neq 0$. Since
$\bar{K}(0,0)=0$, by the implicit function theorem we conclude that
there exists a function $\varepsilon_{2}$ defined in the
neighborhood of zero, such that
$\bar{K}(\varepsilon_{1},\varepsilon_{2}(\varepsilon_{1}))=0$, \textrm{i.e.},
we may choose a subset of variations $\bar{y}$ satisfying the
isoperimetric constraint.

Let us now consider the real function
\begin{equation*}
\bar{L}(\varepsilon_{1},\varepsilon_{2})=\mathcal{L}(\bar{y})=\left(\int_a^b
L_{\Delta}[\bar{y}](t) \Delta t\right) \left(\int_a^b
L_{\nabla}\{\bar{y}\}(t) \nabla t\right).
\end{equation*}
By hypothesis, $(0,0)$ is an extremal of $\bar{L}$ subject to the
constraint $\bar{K}=0$ and $\nabla \bar{K}(0,0)\neq \textbf{0}$. By
the Lagrange multiplier rule, there exists some real $\lambda$ such
that $\nabla(\bar{L}(0,0)-\lambda\bar{K}(0,0))=\textbf{0}$. Having
in mind that $\eta_{1}(a)=\eta_{1}(b)=0$, we can write
\begin{multline}\label{function:L}
\left.\frac{\partial\bar{L}}{\partial
\varepsilon_{1}}\right|_{(0,0)} =\mathcal{L}_\nabla(\hat{y})
\int_a^b \left(\partial_3 L_\Delta[\hat{y}](t)-\int_a^t
\partial_2 L_\Delta[\hat{y}](\tau)\Delta\tau\right)\eta_{1}^{\Delta}(t)
\Delta t\\
+\mathcal{L}_\Delta(\hat{y}) \int_a^b \left(\partial_3
L_\nabla\{\hat{y}\}(t)-\int_a^t\partial_2
L_\nabla\{\hat{y}\}(\tau)\nabla \tau \right)
\eta_{1}^\nabla(t)\nabla t
\end{multline}
and
\begin{multline}\label{function:K}
\left.\frac{\partial\bar{K}}{\partial
\varepsilon_{1}}\right|_{(0,0)}=\mathcal{K}_\nabla(\hat{y}) \int_a^b
\left(\partial_3 K_\Delta[\hat{y}](t)-\int_a^t
\partial_2 K_\Delta[\hat{y}](\tau)\Delta\tau\right)\eta_{1}^{\Delta}(t) \Delta
t\\
+\mathcal{K}_\Delta(\hat{y}) \int_a^b \left(\partial_3
K_\nabla\{\hat{y}\}(t)-\int_a^t\partial_2
K_\nabla\{\hat{y}\}(\tau)\nabla \tau \right)
\eta_{1}^\nabla(t)\nabla t.
\end{multline}
Let $$m(t)=\mathcal{L}_\nabla(\hat{y})\left(\partial_3
L_\Delta[\hat{y}](t)-\int_a^t
\partial_2 L_\Delta[\hat{y}](\tau)\Delta\tau\right)$$
and $$n(t)=\mathcal{L}_\Delta(\hat{y})\left(\partial_3
L_\nabla\{\hat{y}\}(t)-\int_a^t\partial_2
L_\nabla\{\hat{y}\}(\tau)\nabla \tau \right).$$ Then equations
\eqref{function:L} and \eqref{function:K} can be written in the form
\begin{equation*}
\left.\frac{\partial\bar{L}}{\partial
\varepsilon_{1}}\right|_{(0,0)}=\int_a^bm(t)\eta_{1}^{\Delta}(t)
\Delta t+\int_a^bn(t)\eta_{1}^\nabla(t)\nabla t
\end{equation*}
and
\begin{equation*}
\left.\frac{\partial\bar{K}}{\partial
\varepsilon_{1}}\right|_{(0,0)}=\int_a^bf(t)\eta_{1}^{\Delta}(t)
\Delta t+\int_a^bg(t)\eta_{1}^\nabla(t)\nabla t.
\end{equation*}
Transforming the delta integrals in the above equalities to nabla
integrals by means of \eqref{eq:DtoN} and using \eqref{eq:chgN_to_D}
we obtain
\begin{equation*}
\left.\frac{\partial\bar{L}}{\partial
\varepsilon_{1}}\right|_{(0,0)}
=\int_a^b\left(m^{\rho}(t)+n(t)\right)\eta_{1}^\nabla(t)\nabla t
\end{equation*}
and
\begin{equation*}
\left.\frac{\partial\bar{K}}{\partial
\varepsilon_{1}}\right|_{(0,0)}
=\int_a^b\left(f^{\rho}(t)+g(t)\right)\eta_{1}^\nabla(t)\nabla t.
\end{equation*}
Therefore,
\begin{equation}
\label{iso}
\int_{a}^{b}\eta_{1}^{\Delta}(t)\left\{m^{\rho}(t)+n(t)
-\lambda\left(f^{\rho}(t)+g(t)\right)\right\}\nabla t=0.
\end{equation}
Since \eqref{iso} holds for any $\eta_{1}$, by Lemma~\ref{DBRL:n}
we have
\begin{equation*}
m^{\rho}(t)+n(t)-\lambda\left(f^{\rho}(t)+g(t)\right)=c
\end{equation*}
for some $c\in \mathbb{R}$ and all $t \in [a,b]_\kappa$. Hence,
condition \eqref{iso:EL1} holds. In a similar way we can obtain
equation \eqref{iso:EL2}. In that case we use relationships
\eqref{eq:chgD_to_N} and \eqref{eq:NtoD},
and \cite[Lemma~4.1]{B:CV:2004}.
\end{proof}

In the particular case $L_\nabla \equiv \frac{1}{b-a}$
we get from Theorem~\ref{thm:mr:iso} the main result of
\cite{F:T:09}:

\begin{corollary}[Theorem~3.4 of \cite{F:T:09}]
Suppose that
\begin{equation*}
J(y)=\int_a^b L(t,y^\sigma(t),y^\Delta(t))\Delta t
\end{equation*}
has a local minimum at $y_\ast$
subject to the boundary conditions $y(a)=y_a$ and $y(b)=y_b$
and the isoperimetric constraint
\begin{equation*}
I(y)=\int_a^b g(t,y^\sigma(t),y^\Delta(t))\Delta t = k \, .
\end{equation*}
Assume that $y_\ast$ is not an extremal for the functional $I$.
Then, there exists a Lagrange multiplier constant
$\lambda$ such that $y_\ast$ satisfies the following equation:
\begin{equation*}
\partial_3 F^\Delta(t,y^\sigma_\ast(t),y^\Delta_\ast(t))
-\partial_2 F(t,y^\sigma_\ast(t),y^\Delta_\ast(t))=0 \
\mbox{ for all } \ t\in[a,b]^{\kappa^2},
\end{equation*}
where $F=L-\lambda g$ and $\partial_3 F^\Delta$ denotes the delta
derivative of a composition.
\end{corollary}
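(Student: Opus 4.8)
The plan is to obtain the Corollary as a direct specialization of Theorem~\ref{thm:mr:iso}, working with the integrated condition \eqref{iso:EL2} and then differentiating once. First I would set $L_\nabla\equiv\frac{1}{b-a}$ and, since the constraint of the Corollary is the single delta integral $I(y)=\int_a^b g(t,y^\sigma,y^\Delta)\,\Delta t$, correspondingly $K_\nabla\equiv\frac{1}{b-a}$, with $L_\Delta=L$ and $K_\Delta=g$. Because $\int_a^b \frac{1}{b-a}\,\nabla t=1$, both product functionals collapse, $\mathcal{L}(y)=\int_a^b L[y]\,\Delta t=J(y)$ and $\mathcal{K}(y)=\int_a^b g[y]\,\Delta t=I(y)$. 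Recalling from the proof of Theorem~\ref{thm:mr:iso} that $\mathcal{L}_\nabla(\hat y)$ and $\mathcal{K}_\nabla(\hat y)$ are precisely the nabla factors $\int_a^b L_\nabla\{\hat y\}\,\nabla t$ and $\int_a^b K_\nabla\{\hat y\}\,\nabla t$, this specialization gives $\mathcal{L}_\nabla(\hat y)=\mathcal{K}_\nabla(\hat y)=1$, while $\mathcal{L}_\Delta(\hat y)=J(\hat y)$ and $\mathcal{K}_\Delta(\hat y)=I(\hat y)$ are (nonzero) constants.

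Next I would observe that, since $L_\nabla$ and $K_\nabla$ are constant in all three arguments, their partials $\partial_2$ and $\partial_3$ vanish identically, so every term in \eqref{iso:EL2} built from $L_\nabla$ or $K_\nabla$ drops out. Writing $F=L-\lambda g$ and using linearity of $\partial_2,\partial_3$, condition \eqref{iso:EL2} then reduces to the integrated (du Bois--Reymond type) Euler--Lagrange equation
$$
\partial_3 F[\hat y](t)-\int_a^t\partial_2 F[\hat y](\tau)\,\Delta\tau=c
\qquad \forall\, t\in[a,b]^\kappa ,
$$
for some constant $c$. Here I would deliberately choose \eqref{iso:EL2} rather than \eqref{iso:EL1}, because in \eqref{iso:EL2} the surviving $L_\Delta$ and $K_\Delta$ terms are evaluated at $t$ (not at $\rho(t)$), which makes the passage to the local form immediate.

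Finally I would delta-differentiate this identity in $t$: the constant contributes nothing, the delta derivative of $\int_a^t\partial_2 F[\hat y](\tau)\,\Delta\tau$ is $\partial_2 F[\hat y](t)$, and the delta derivative of $t\mapsto\partial_3 F[\hat y](t)$ is what the statement denotes $\partial_3 F^\Delta$. This yields exactly
$$
\partial_3 F^\Delta(t,y_\ast^\sigma(t),y_\ast^\Delta(t))-\partial_2 F(t,y_\ast^\sigma(t),y_\ast^\Delta(t))=0 ,
$$
valid on the smaller set $[a,b]^{\kappa^2}$, where the differentiation is legitimate. I do not expect a genuine obstacle: the argument is a substitution followed by one differentiation. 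The only points meriting care are the bookkeeping that tracks which product factor multiplies which term (confirming $\mathcal{L}_\nabla=\mathcal{K}_\nabla=1$ and the vanishing of all nabla contributions), and the shrinking of the domain from $[a,b]^\kappa$ to $[a,b]^{\kappa^2}$ after the single delta differentiation.
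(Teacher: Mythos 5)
Your proposal is correct and follows exactly the route the paper intends: the paper proves this corollary only by the one-line remark that it is the particular case $L_\nabla\equiv\frac{1}{b-a}$ (hence $\mathcal{L}_\nabla=\mathcal{K}_\nabla=1$ and all nabla terms in \eqref{iso:EL2} vanish) of Theorem~\ref{thm:mr:iso}, and your write-up simply supplies the bookkeeping, the choice of \eqref{iso:EL2} over \eqref{iso:EL1}, and the final delta differentiation that the authors leave implicit. No gaps.
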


One can easily cover abnormal extremizers within our result by
introducing an extra multiplier $\lambda_{0}$.

\begin{theorem}
\label{th:iso:abn}
If $\hat{y} \in C_{\diamond}^1$ is an extremizer for the isoperimetric problem
\eqref{problem:P:iso}--\eqref{const}, then there exist two constants
$\lambda_{0}$ and $\lambda$, not both zero, such that $\hat{y}$
satisfies the following delta-nabla integral equations:
\begin{multline}
\label{iso:EL1:abn}
\lambda_{0}\left\{\mathcal{L}_\nabla(\hat{y})
\left(\partial_3 L_\Delta[\hat{y}](\rho(t))
-\int_{a}^{\rho(t)} \partial_2 L_\Delta[\hat{y}](\tau) \Delta\tau\right)\right.\\
+ \left.\mathcal{L}_\Delta(\hat{y}) \left(\partial_3
L_\nabla\{\hat{y}\}(t) -\int_{a}^{t} \partial_2
L_\nabla\{\hat{y}\}(\tau) \nabla\tau\right) \right\}\\
-\lambda\left\{\mathcal{K}_\nabla(\hat{y}) \left(\partial_3
K_\Delta[\hat{y}](\rho(t))
-\int_{a}^{\rho(t)} \partial_2 K_\Delta[\hat{y}](\tau) \Delta\tau\right)\right.\\
\left.+ \mathcal{K}_\Delta(\hat{y}) \left(\partial_3
K_\nabla\{\hat{y}\}(t) -\int_{a}^{t} \partial_2
K_\nabla\{\hat{y}\}(\tau) \nabla\tau\right)\right\} = \text{const}
\quad \forall t \in [a,b]_\kappa \, ;
\end{multline}
\begin{multline}
\label{iso:EL2:iso}
\lambda_{0}\left\{\mathcal{L}_\nabla(\hat{y})
\left(\partial_3 L_\Delta[\hat{y}](t)
-\int_{a}^{t} \partial_2 L_\Delta[\hat{y}](\tau) \Delta\tau\right)\right.\\
+\left. \mathcal{L}_\Delta(\hat{y}) \left(\partial_3
L_\nabla\{\hat{y}\}(\sigma(t)) -\int_{a}^{\sigma(t)} \partial_2
L_\nabla\{\hat{y}\}(\tau) \nabla\tau\right)\right\} \\
-\lambda \left\{\mathcal{K}_\nabla(\hat{y}) \left(\partial_3
K_\Delta[\hat{y}](t)
-\int_{a}^{t} \partial_2 K_\Delta[\hat{y}](\tau) \Delta\tau\right)\right.\\
+ \left.\mathcal{K}_\Delta(\hat{y}) \left(\partial_3
K_\nabla\{\hat{y}\}(\sigma(t)) -\int_{a}^{\sigma(t)} \partial_2
K_\nabla\{\hat{y}\}(\tau) \nabla\tau\right)\right\}= \text{const}
\quad \forall t \in [a,b]^\kappa \, .
\end{multline}
\end{theorem}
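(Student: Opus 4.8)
The plan is to derive Theorem~\ref{th:iso:abn} from the two cases that are already available, namely the normal case furnished by Theorem~\ref{thm:mr:iso} and the abnormal case encoded directly in the definition of an extremal for $\mathcal{K}$. The role of the extra multiplier $\lambda_{0}$ is precisely to fuse these two situations into a single statement, the side condition ``$\lambda_{0}$ and $\lambda$ not both zero'' recording which branch we are in. Accordingly, I would open the proof by splitting on whether or not $\hat{y}$ is an extremal for $\mathcal{K}$, which by the definition of normal/abnormal extremizers is an exhaustive and mutually exclusive dichotomy.

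First I would treat the normal case, in which $\hat{y}$ is \emph{not} an extremal for $\mathcal{K}$. By definition $\hat{y}$ is then a normal extremizer, so Theorem~\ref{thm:mr:iso} applies directly and produces a constant $\lambda \in \mathbb{R}$ for which \eqref{iso:EL1} and \eqref{iso:EL2} hold. Taking $\lambda_{0}=1$ turns these identities into \eqref{iso:EL1:abn} and \eqref{iso:EL2:iso}, and since $\lambda_{0}=1\neq 0$ the pair $(\lambda_{0},\lambda)$ is nonzero, as required.

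Next I would handle the abnormal case, where $\hat{y}$ \emph{is} an extremal for $\mathcal{K}$. By definition $\hat{y}$ then satisfies \eqref{eq:EL1:iso} and \eqref{eq:EL2:iso}. Setting $\lambda_{0}=0$ and $\lambda=1$, the conditions \eqref{iso:EL1:abn} and \eqref{iso:EL2:iso} collapse, up to an overall sign, exactly onto \eqref{eq:EL1:iso} and \eqref{eq:EL2:iso}, and hence hold; moreover $(\lambda_{0},\lambda)=(0,1)$ is nonzero. In both branches the two constants are not both zero, which closes the argument.

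The only point requiring care is to confirm that the reduction in each branch is genuine rather than vacuous, and that the case split is complete. Completeness is immediate from the definition of normal and abnormal extremizers, which partitions every extremizer according to whether it is an extremal for $\mathcal{K}$. I expect no analytic obstacle here: all of the substantive work — the integration by parts, the passage between delta and nabla integrals through \eqref{eq:DtoN}--\eqref{eq:NtoD} together with \eqref{eq:chgN_to_D}--\eqref{eq:chgD_to_N}, and the final appeal to the Dubois--Reymond Lemma~\ref{DBRL:n} — has already been carried out inside the proof of Theorem~\ref{thm:mr:iso}, so nothing new needs to be computed and the proof is essentially bookkeeping around the two multiplier choices.
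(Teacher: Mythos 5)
Your proof is correct, but it takes a genuinely different route from the paper. You derive the theorem by a case split: if $\hat{y}$ is not an extremal for $\mathcal{K}$ it is by definition a normal extremizer, so Theorem~\ref{thm:mr:iso} applies and you set $\lambda_{0}=1$; if it is an extremal for $\mathcal{K}$, the conclusion with $(\lambda_{0},\lambda)=(0,1)$ is literally the content of \eqref{eq:EL1:iso}--\eqref{eq:EL2:iso} (up to the harmless sign absorbed into the unspecified constant). The dichotomy is exhaustive by the paper's definition of normal/abnormal, so the argument closes with no new analysis. The paper instead reruns the variational computation from the proof of Theorem~\ref{thm:mr:iso} and replaces the classical Lagrange multiplier rule by the extended (Fritz John type) rule applied to the reduced functions $\bar{L},\bar{K}$ of $(\varepsilon_{1},\varepsilon_{2})$, which produces $\lambda_{0},\lambda$ not both zero without any constraint qualification, and then concludes via Lemma~\ref{DBRL:n} as before. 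Your version is more economical, treating Theorem~\ref{thm:mr:iso} as a black box and reducing the abnormal branch to a tautology; the paper's version gives a single unified derivation that does not hinge on the prior theorem and scales more gracefully to settings with several isoperimetric constraints, where an explicit case analysis over which constraints are degenerate would become unwieldy. Both arguments are valid and yield the same statement.
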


\begin{proof}
Following the proof of Theorem~\ref{thm:mr:iso}, since $(0,0)$ is an
extremal of $\bar{L}$ subject to the constraint $\bar{K}=0$, the
extended Lagrange multiplier rule (see for instance
\cite[Theorem~4.1.3]{Brunt}) asserts the existence of reals
$\lambda_{0}$ and $\lambda$, not both zero, such that
$\nabla(\lambda_{0}\bar{L}(0,0)-\lambda\bar{K}(0,0))=\textbf{0}$.
Therefore,
\begin{equation}\label{iso:abn}
\int_{a}^{b}\eta_{1}^{\Delta}(t)\left\{\lambda_{0}\left(m^{\rho}(t)+n(t)\right)
-\lambda\left(f^{\rho}(t)+g(t)\right)\right\}\nabla t=0.
\end{equation}
Since \eqref{iso:abn} holds for any $\eta_{1}$, by
Lemma~\ref{DBRL:n}, we have
\begin{equation*}
\lambda_{0}\left(m^{\rho}(t)+n(t)\right)-\lambda\left(f^{\rho}(t)+g(t)\right)=c
\end{equation*}
for some $c\in \mathbb{R}$ and all $t \in [a,b]_\kappa$. This
establishes equation \eqref{iso:EL1:abn}. Equation
\eqref{iso:EL2:iso} can be shown using a similar technique.
\end{proof}

\begin{remark}
If $\hat{y} \in C_{\diamond}^1$ is an extremizer for the isoperimetric problem
\eqref{problem:P:iso}--\eqref{const}, then we can choose $\lambda_{0}=1$
in Theorem~\ref{th:iso:abn} and obtain Theorem~\ref{thm:mr:iso}. For
abnormal extremizers, Theorem~\ref{th:iso:abn} holds with
$\lambda_{0}=0$. The condition $(\lambda_{0},\lambda)\neq\textbf{0}$
guarantees that Theorem~\ref{th:iso:abn} is a useful necessary
optimality condition.
\end{remark}

In the particular case $L_\Delta \equiv \frac{1}{b-a}$
we get from Theorem~\ref{th:iso:abn}
the main result of \cite{A:T}:

\begin{corollary}[Theorem~2 of \cite{A:T}]
If $y$ is a local minimizer or maximizer for
\begin{equation*}
I[y]=\int_{a}^{b}f(t,y^\rho(t),y^\nabla(t))\nabla t
\end{equation*}
subject to the boundary conditions $y(a)=\alpha$ and $y(b)=\beta$ and
the nabla-integral constraint
\begin{equation*}
J[y]=\int_{a}^{b}g(t,y^\rho(t),y^\nabla(t))
\nabla t =\Lambda \, ,
\end{equation*}
then there exist two constants $\lambda_0$ and $\lambda$,
not both zero, such that
$$\partial_3 K^\nabla\left(t,y^\rho(t),y^\nabla(t)\right)
-\partial_2 K\left(t,y^\rho(t),y^\nabla(t)\right)=0$$
for all $t \in [a,b]_{\kappa}$,
where $K=\lambda_0 f-\lambda g$.
\end{corollary}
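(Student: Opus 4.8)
The plan is to obtain Theorem~\ref{th:iso:abn} as a direct extension of Theorem~\ref{thm:mr:iso}, replicating the variational construction but invoking the \emph{extended} (abnormal) Lagrange multiplier rule in place of the ordinary one. First I would set up exactly the same two-parameter family of variations $\bar{y}=\hat{y}+\varepsilon_1\eta_1+\varepsilon_2\eta_2$ as in the proof of Theorem~\ref{thm:mr:iso}, with $\eta_i\in C_\diamond^1([a,b],\mathbb{R})$ vanishing at the endpoints, and form the two real functions $\bar{L}(\varepsilon_1,\varepsilon_2)=\mathcal{L}(\bar{y})$ and $\bar{K}(\varepsilon_1,\varepsilon_2)=\mathcal{K}(\bar{y})-k$. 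The crucial structural observation is that the computations of $\left.\partial\bar{L}/\partial\varepsilon_1\right|_{(0,0)}$ and $\left.\partial\bar{K}/\partial\varepsilon_1\right|_{(0,0)}$ carried out there do \emph{not} use normality; the integration-by-parts manipulation and the conversion of delta integrals to nabla integrals via \eqref{eq:DtoN} and \eqref{eq:chgN_to_D} go through verbatim, yielding again
\begin{equation*}
\left.\frac{\partial\bar{L}}{\partial\varepsilon_1}\right|_{(0,0)}
=\int_a^b\bigl(m^\rho(t)+n(t)\bigr)\eta_1^\nabla(t)\,\nabla t,
\qquad
\left.\frac{\partial\bar{K}}{\partial\varepsilon_1}\right|_{(0,0)}
=\int_a^b\bigl(f^\rho(t)+g(t)\bigr)\eta_1^\nabla(t)\,\nabla t,
\end{equation*}
with $f,g,m,n$ defined exactly as before.

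The point of departure is the treatment of the constraint. In the normal case one needed $\nabla\bar{K}(0,0)\neq\mathbf{0}$ to apply both the implicit function theorem and the classical multiplier rule; that nondegeneracy was precisely what normality supplied via Lemma~\ref{DBRL:n} and \eqref{eq:EL2:iso}. Here I would drop that requirement entirely. Since $(0,0)$ is still an extremal of $\bar{L}$ subject to $\bar{K}=0$, I would invoke the extended Lagrange multiplier rule, for instance \cite[Theorem~4.1.3]{Brunt}, which asserts the existence of reals $\lambda_0$ and $\lambda$, \emph{not both zero}, with $\nabla\bigl(\lambda_0\bar{L}(0,0)-\lambda\bar{K}(0,0)\bigr)=\mathbf{0}$. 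This is the one genuinely new ingredient and is exactly what allows abnormal extremizers (those with $\nabla\bar{K}(0,0)=\mathbf{0}$) to be captured by setting $\lambda_0=0$.

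Writing out the $\varepsilon_1$-component of that vector identity gives
\begin{equation*}
\int_a^b\eta_1^\Delta(t)\Bigl\{\lambda_0\bigl(m^\rho(t)+n(t)\bigr)-\lambda\bigl(f^\rho(t)+g(t)\bigr)\Bigr\}\nabla t=0,
\end{equation*}
valid for every admissible $\eta_1$. Applying the nabla Dubois--Reymond Lemma~\ref{DBRL:n} to the bracketed ld-continuous coefficient yields
$\lambda_0\bigl(m^\rho(t)+n(t)\bigr)-\lambda\bigl(f^\rho(t)+g(t)\bigr)=c$
for some constant $c$ and all $t\in[a,b]_\kappa$, which upon unfolding the definitions of $m,n,f,g$ is precisely equation \eqref{iso:EL1:abn}. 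The companion equation \eqref{iso:EL2:iso} follows by the symmetric argument, converting nabla integrals to delta integrals through \eqref{eq:NtoD} and using \eqref{eq:chgD_to_N} together with the delta Dubois--Reymond lemma \cite[Lemma~4.1]{B:CV:2004}, exactly as in the second half of the proof of Theorem~\ref{thm:mr:iso}.

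I do not anticipate a serious analytic obstacle, since the hard variational work was already done for the normal case; the only substantive step is the correct invocation of the extended multiplier rule and the bookkeeping that guarantees $(\lambda_0,\lambda)\neq\mathbf{0}$. The main thing to be careful about is \emph{not} to reintroduce the implicit-function-theorem argument (which required the nondegeneracy of the constraint gradient), since that would silently exclude the abnormal case; here one works directly with the Fréchet/partial derivatives of $\bar L$ and $\bar K$ at the origin and lets the extended rule absorb the possible degeneracy.
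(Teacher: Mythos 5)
Your proposal is a correct reconstruction of the proof of Theorem~\ref{th:iso:abn} --- in fact it coincides almost line for line with the paper's own proof of that theorem (same two-parameter variation, same auxiliary functions $f,g,m,n$, the extended multiplier rule of \cite[Theorem~4.1.3]{Brunt}, then Lemma~\ref{DBRL:n}). But the statement you were asked to prove is not Theorem~\ref{th:iso:abn}: it is the Corollary recovering Theorem~2 of \cite{A:T}, i.e.\ the purely nabla isoperimetric problem. Your argument never touches that statement. It does not explain how the functionals $I$ and $J$ of the Corollary are realized inside the delta-nabla framework, and it stops at the integral equations \eqref{iso:EL1:abn} and \eqref{iso:EL2:iso} instead of arriving at the pointwise Euler--Lagrange equation $\partial_3 K^\nabla-\partial_2 K=0$ that the Corollary asserts. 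Re-deriving the theorem is redundant (the paper already proves it); the entire content of the Corollary's proof is the specialization, which is missing.

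Concretely, what is needed is to take $L_\Delta\equiv\frac{1}{b-a}$ and $K_\Delta\equiv\frac{1}{b-a}$, with $L_\nabla=f$ and $K_\nabla=g$, so that $\mathcal{L}(y)=I[y]$ and $\mathcal{K}(y)=J[y]$, $\mathcal{L}_\Delta(\hat{y})=\mathcal{K}_\Delta(\hat{y})=1$, and $\partial_2 L_\Delta=\partial_3 L_\Delta=\partial_2 K_\Delta=\partial_3 K_\Delta=0$. The terms of \eqref{iso:EL1:abn} weighted by $\mathcal{L}_\nabla(\hat{y})$ and $\mathcal{K}_\nabla(\hat{y})$ then vanish identically, and the equation collapses to
\[
\partial_3 K\{\hat{y}\}(t)-\int_a^t\partial_2 K\{\hat{y}\}(\tau)\,\nabla\tau=\text{const}
\quad \forall t\in[a,b]_\kappa\,,
\]
with $K=\lambda_0 f-\lambda g$ and $(\lambda_0,\lambda)\neq(0,0)$ inherited from the theorem; nabla-differentiating this Dubois--Reymond form yields $\partial_3 K^\nabla(t,y^\rho(t),y^\nabla(t))-\partial_2 K(t,y^\rho(t),y^\nabla(t))=0$ on $[a,b]_\kappa$, which is the claimed conclusion. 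This reduction --- identifying the data, observing the cancellation of all delta contributions, and passing from the integral to the differentiated form --- is what the paper means by ``in the particular case $L_\Delta\equiv\frac{1}{b-a}$ we get the main result of \cite{A:T},'' and it is absent from your write-up.
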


\section{An Example}
\label{sec:ex}

Let $\mathbb{T}=\{1,2,3,\ldots,M\}$, where $M\in\mathbb{N}$ and
$M\geq 2$. Consider the problem
\begin{equation}\label{ex:iso:1}
\begin{gathered}
\textrm{minimize} \quad
\mathcal{L}(y)=\left(\int_{0}^{M}(y^\Delta(t))^2\Delta
t\right)\left(\int_{0}^{M}\left(y^\nabla(t))^2+y^\nabla(t)\right)\nabla
t\right)\\
y(0)=0, \quad y(M)=M,
\end{gathered}
\end{equation}
subject to the constraint
\begin{equation}\label{ex:iso:2}
\mathcal{K}(y)=\int_{0}^{M}ty^{\Delta}(t)\Delta t=1.
\end{equation}
Since
\begin{equation*}
L_{\Delta}=(y^\Delta)^2,\quad L_{\nabla}=(y^\nabla)^2+y^\nabla,\quad
K_{\Delta}=ty^{\Delta},\quad K_{\nabla}=\frac{1}{M}
\end{equation*}
we have
\begin{equation*}
\partial_2L_{\Delta}=0,\quad \partial_3L_{\Delta}
=2y^\Delta, \quad \partial_2L_{\nabla}=0,
\quad \partial_3L_{\nabla}=2y^\nabla+1,
\end{equation*}
and
\begin{equation*}
\partial_2K_{\Delta}=0,\quad \partial_3K_{\Delta}=t,
\quad \partial_2K_{\nabla}=0,\quad \partial_3K_{\nabla}=0.
\end{equation*}
As
\begin{multline*}
\mathcal{K}_\nabla(\hat{y}) \left(\partial_3K_\Delta[\hat{y}](t)
-\int_{a}^{t} \partial_2 K_\Delta[\hat{y}](\tau) \Delta\tau\right)\\
+ \mathcal{K}_\Delta(\hat{y}) \left(\partial_3
K_\nabla\{\hat{y}\}(\sigma(t)) -\int_{a}^{\sigma(t)} \partial_2
K_\nabla\{\hat{y}\}(\tau) \nabla\tau\right) =t
\end{multline*}
there are no abnormal extremals for the problem
\eqref{ex:iso:1}--\eqref{ex:iso:2}. Applying
equation \eqref{iso:EL2} of Theorem~\ref{thm:mr:iso}
we get the following delta-nabla differential equation:
\begin{equation}\label{ex:iso:3}
2Ay^{\Delta}(t)+B+2By^{\nabla}(\sigma(t))-\lambda t=C,
\end{equation}
where $C\in\mathbb{R}$ and $A$, $B$ are the values of functionals
$\mathcal{L}_{\nabla}$ and $\mathcal{L}_{\Delta}$ in a solution of
\eqref{ex:iso:1}--\eqref{ex:iso:2}, respectively. Since
$y^{\nabla}({\sigma}(t))=y^\Delta(t)$ \eqref{eq:chgD_to_N},
we can write equation \eqref{ex:iso:3} in the form
\begin{equation}
\label{ex:iso:4}
2Ay^{\Delta}(t)+B+2By^\Delta-\lambda t=C.
\end{equation}
Observe that $B\neq 0$ and $A>2$. Hence, solving equation
\eqref{ex:iso:4} subject to the boundary conditions $y(0)=0$ and
$y(M)=M$ we get
\begin{equation}\label{ex:iso:5}
y(t)= \left[1 - \frac{\lambda\left(M-t\right)}{4(A+B)}\right] t \, .
\end{equation}
Substituting \eqref{ex:iso:5} into \eqref{ex:iso:2} we obtain
$\lambda=-\frac{\left( A+B \right) \left( M-2 \right)}{12 M
\left(M-1 \right)}$. Hence,
\begin{equation*}
y(t)=\frac{\left(4\,{M}^{2}-7\,M-3 M \,t + 6\,t\right) t}{M
\left( M-1\right)}
\end{equation*}
is an extremal for the problem \eqref{ex:iso:1}--\eqref{ex:iso:2}.

\section{Conclusion}
\label{sec:conc}

Minimization of functionals
given by the product of two integrals
were considered by Euler himself,
and are now receiving an increase of interest
due to their nonlocal properties
and applications to economics \cite{Pedregal,Bedlewo:2009}.
In this paper we obtained general necessary optimality conditions
for isoperimetric problems of the calculus of variations
on time scales. Our results extend the ones with delta derivatives
proved in \cite{F:T:09} and analogous nabla
results \cite{A:T} to more general variational problems
described by the product of delta and nabla integrals.

\section{Open Problems}
\label{sec:open}

The results here obtained can be generalized in different ways:
(i) to variational problems involving higher-order delta and nabla derivatives,
unifying and extending the higher-order results on time scales
of \cite{F:T:08} and \cite{NM:T};
(ii) to problems of the calculus of variations
with a functional which is the composition of a certain scalar
function $H$ with the delta integral of a vector valued field $f_\Delta$
and a nabla integral of a vector field $f_\nabla$, \textrm{i.e.}, of the form
$$H\left(\int_{a}^{b}f_\Delta(t,y^{\sigma}(t),y^{\Delta}(t))\Delta t \, ,
\int_{a}^{b}f_\nabla(t,y^{\rho}(t),y^{\nabla}(t))\nabla t\right)\, .$$
It remains to prove Euler-Lagrange equations and natural boundary conditions
for such problems on time scales, with or without constraints.

Sufficient optimality conditions for delta-nabla problems
of the calculus of variations is a completely open question.
It would be also interesting to study direct optimization
methods, extending the results of \cite{abmalina:delfim}
to the more general delta-nabla setting.\\


{\bf ACKNOWLEDGEMENTS.}
This work was partially supported by the \emph{Portuguese Foundation
for Science and Technology} (FCT) through the
\emph{Systems and Control Group} of the R\&D Unit CIDMA.
The first author is also supported by Bia{\l}ystok University of Technology,
via a project of the Polish Ministry of Science and Higher Education
``Wsparcie miedzynarodowej mobilnosci naukowcow''.



\end{document}